\theoremstyle{plain}
\newtheorem{thm}{Theorem}[section]
\newtheorem{lem}[thm]{Lemma}
\newtheorem{prop}[thm]{Proposition}
\theoremstyle{definition}
\newtheorem{defn}[thm]{Definition}
\newcommand{\N}{\mathbb{N}}
\newcommand{\Z}{\mathbb{Z}}
\newcommand{\bast}{{\displaystyle\ast}}
\newcommand{\mc}[1]{\mathcal{#1}}
\newcommand{\fm}[1]{\mc{#1}^{\pm\bast}}
\newcommand{\hast}{\mbox{\huge $\ast$ \normalsize}\!\!\!}
\DeclareMathOperator{\Area}{Area} 
 \DeclareMathOperator{\Aut}{Aut}
\DeclareMathOperator{\Ab}{Ab}
\DeclareMathOperator{\FreeEq}{\stackrel{\text{free}}{=}}
\DeclareMathOperator{\DEdge}{DEdge}
\begin{document}
\title[A Subgroup of a Direct Product of
Free Groups]{A Subgroup of a Direct Product of Free Groups whose
Dehn Function has a Cubic Lower Bound}
\author{Will Dison}
\address{Will Dison, Department of Mathematics, University Walk, Bristol, BS8 1TW, United Kingdom}
\email{\emph{w.dison@bristol.ac.uk}}
\subjclass[2000]{20F65(primary), 20E05, 20F06 (secondary).}
\date{}

\begin{abstract}
We establish a cubic lower bound on the Dehn function of a certain
finitely presented subgroup of a direct product of $3$ free groups.
\end{abstract}

\maketitle

\begin{center}
\footnotesize
Department of Mathematics\\
University Walk\\
Bristol, BS8 1TW\\
United Kingdom\\
\bigskip
\texttt{w.dison@bristol.ac.uk}
\end{center}

\section{Introduction}

The collection $\mc{S}$ of subgroups of direct products of free
groups is surprisingly rich and has been studied by many authors. In
the early 1960s Stallings \cite{stal63} exhibited a subgroup of
$(F_2)^3$, where $F_2$ is the rank $2$ free group, as the first
known example of a finitely presented group whose third integral
homology group is not finitely generated.  Bieri \cite{Bieri1}
demonstrated that Stallings' group belongs to a sequence of groups
${\rm SB}_n \leq (F_2)^n$, the Stallings-Bieri groups, with ${\rm
SB}_n$ being of type $\mc{F}_{n-1}$ but not of type $\mc{FP}_n$.
(See \cite{Brown1} for definitions and background concerning
finiteness properties of groups.)

In the realm of decision theory, Miha{\u\i}lova \cite{miha58} and
Miller \cite{mill71} exhibited a finitely generated subgroup of
$(F_2)^2$ with undecidable conjugacy and membership problems.  It is
thus seen that even in the $2$-factor case fairly wild behaviour is
encountered amongst the subgroups of direct products of free groups.

In contrast to this, Baumslag and Roseblade \cite{baum84} showed
that in the $2$-factor case this wildness only manifests itself
amongst the subgroups which are not finitely presented.  They proved
that if $G$ is a finitely presented subgroup of $F^{(1)} \times
F^{(2)}$, where $F^{(1)}$ and $F^{(2)}$ are free groups, then $G$ is
itself virtually a direct product of at most $2$ free groups.  This
work was extended by Bridson, Howie, Miller and Short \cite{brid02B}
to an arbitrary number of factors. They proved that if $G$ is a
subgroup of a direct product of $n$ free groups and if $G$ enjoys
the finiteness property $\mc{FP}_n$, then $G$ is virtually a direct
product of at most $n$ free groups. Further information on the
finiteness properties of the groups in class $\mc{S}$ was provided
by Meinert \cite{mein94} who calculated the BNS invariants of direct
products of free groups. These invariants determine the finiteness
properties of all subgroups lying above the commutator subgroup.

Several authors have investigated the isoperimetric behaviour of the
finitely presented groups in $\mc{S}$.  Elder, Riley, Young and this author
\cite{Dison07} have shown that the Dehn function of Stallings' group ${\rm SB}_3$ is quadratic.  The
method espoused by Bridson in \cite{Bridson3} proves that the
function $n^3$ is an upper bound on the Dehn functions of each of
the Stallings-Bieri groups.  In contrast, there have been no results
which give non-trivial lower bounds on the Dehn functions of any
groups in $\mc{S}$.  In other words, until now there has been no
finitely presented subgroup of a direct product of free groups whose
Dehn function was known to be greater than that of the ambient
group. The purpose of this paper is to construct such a subgroup: we
exhibit a finitely presented subgroup of $(F_2)^3$ whose Dehn
function has the function $n^3$ as a lower bound.

Let $K$ be the kernel of a homomorphism $\theta : (F_2)^3
\rightarrow \Z^2$ whose restriction to each factor $F_2$ is
surjective.  By Lemma \ref{lem1} below the isomorphism class of $K$
is independent of the homomorphism chosen.  Results in \cite{mein94}
show that $K$ is finitely presented.

\begin{thm} \label{thm1}
  The Dehn function $\delta$ of $K$ satisfies $\delta(n) \succeq
  n^3$.
\end{thm}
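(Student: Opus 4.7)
The plan is to exhibit a sequence of null-homotopic words $(w_n)$ over a fixed finite generating set of $K$ with $|w_n| = O(n)$, and to show that every van Kampen diagram for $w_n$ over any finite presentation of $K$ has area at least $c n^3$.

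For the test words, write $F^{(i)} = \langle a_i, b_i \rangle$ for $i=1,2,3$, and by Lemma~\ref{lem1} choose $\theta$ so that $\theta(a_i) = e_1$ and $\theta(b_i) = e_2$ in $\Z^2$. The natural coordinate-balanced elements of $K$ include $\alpha = (a_1, a_2^{-1}, 1)$, $\alpha' = (b_1, b_2^{-1}, 1)$, $\gamma = (b_1, 1, b_3^{-1})$, and $\gamma' = (a_1, 1, a_3^{-1})$. A direct computation yields $[\alpha^n, \gamma^n] = ([a_1^n, b_1^n], 1, 1)$ and $[\alpha'^n, \gamma'^n] = ([b_1^n, a_1^n], 1, 1)$, so the word $w_n := [\alpha^n, \gamma^n]\cdot[\alpha'^n, \gamma'^n]$ is null-homotopic in $K$ and has length $O(n)$ in any finite generating set of $K$ containing $\alpha, \alpha', \gamma, \gamma'$. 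Note that $w_n$ admits a van Kampen diagram of area $O(n^2)$ in $(F_2)^3$ by exploiting the cross-factor commutativity directly; the content of the theorem is that this shortcut is unavailable inside $K$.

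The heart of the proof is the cubic lower bound. I would construct an integer-valued combinatorial invariant $\Phi$ on van Kampen diagrams over $K$ which is additive on $2$-cells, vanishes on every defining relator (so that its value on a diagram depends only on the boundary word), and evaluates to $\asymp n^3$ on $w_n$. A natural candidate is a flux through a distinguished dual arc in the presentation $2$-complex of $K$: count signed crossings of the arc by edges of the diagram, weighted by the $\Z^2$-height of each edge under the coset labelling coming from the ambient quotient $(F_2)^3 \to \Z^2$. Since each $2$-cell contributes a bounded amount to $\Phi$, the inequality $\mathrm{Area}(w_n) \ge |\Phi(w_n)|/C$ then produces the bound $\delta(n) \succeq n^3$.

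The main technical obstacle is the construction and verification of $\Phi$. In the Heisenberg setting the cubic lower bound rests on a central element whose exponent can be read off unambiguously; here $K$ has only a two-dimensional $\Z^2$-quotient, no single central direction, and relators inherited from commutativity between factors of $(F_2)^3$ as well as from kernel-of-$\theta$ relations on each factor. Verifying that $\Phi$ annihilates every such relator while remaining nontrivial on $w_n$ is the principal difficulty. An alternative I would pursue in parallel is to avoid an explicit $\Phi$ by working in the $\Z^2$-cover of the presentation $2$-complex of $(F_2)^3$ and extracting the cubic lower bound geometrically from the $\Z^2$-action, in the spirit of Gromov's filling inequalities for nilpotent groups.
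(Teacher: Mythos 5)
Your test word is fine as far as it goes ($w_n$ is indeed null-homotopic of linear length), but the entire lower-bound engine of your proposal --- the invariant $\Phi$ --- is missing, and as you have specified it, it cannot exist. If $\Phi$ is additive over $2$-cells and vanishes on every defining relator, then its value on any van Kampen diagram is zero, so its ``value on the boundary word'' of a null-homotopic word is zero: it cannot evaluate to $\asymp n^3$ on $w_n$. The standard repair (each cell contributes a bounded but nonzero amount, yielding $|\Phi(\partial\Delta)| \leq C\,\mathrm{Area}(\Delta)$) is exactly what your height-weighted flux destroys: on a diagram filling a word of length $O(n)$ the $\Z^2$-heights of edges can themselves be of order $n$, so the per-cell contribution is unbounded and the inequality $\mathrm{Area} \geq |\Phi|/C$ fails. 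You flag the construction of $\Phi$ as ``the principal difficulty,'' but that difficulty \emph{is} the theorem, and the obstruction you yourself note --- $K$ has no central direction whose exponent a cocycle could read off, unlike the Heisenberg group --- is why no such local, cell-additive invariant is known here. Separately, you never establish that your particular $w_n$ has cubic area at all; a priori it could admit subcubic fillings in $K$, and a lower bound must be proved against arbitrary diagrams over an arbitrary finite presentation.

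The paper's mechanism is genuinely different and worth internalizing. It first proves a splitting $K \cong L_1 \ast_M L_2$ with $M = K^2_2(2)$ and $L_i \cong K^2_2(1)$ (Theorem \ref{thm2}), then proves a general amalgam result (Theorem \ref{thm3}): if $w$ represents $h \in H$ and $u, v$ represent elements of $G_1 \smallsetminus H$ and $G_2 \smallsetminus H$ commuting with $h$, then $\Area_{\mathcal{P}}([w,(uv)^n]) \geq 2n\,\mathrm{d}_{\mathcal{B}}(1,h)$. The proof maps a filling diagram to the Bass-Serre tree and extracts $2n$ disjoint $\mathcal{B}$-paths, each of length at least $\mathrm{d}_{\mathcal{B}}(1,h)$; no two of their edges share a $2$-cell because each relator contains at most one $\mathcal{B}$-letter. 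The cubic bound then comes from multiplying the quadratic distortion estimate $\mathrm{d}_{\mathcal{B}}(1,[x_1^n,y_1^n]) \geq n^2$ by the $n$-fold alternation in $(uv)^n$. Notably, the only place a counting argument of your flavour legitimately appears is in that distortion estimate: writing $h_n = [x_1^n,y_1^n]$ in the generators $\{x_1x_2^{-1},\, y_1y_2^{-1},\, [x_1,y_1]\}$ requires at least $n^2$ occurrences of $[x_1,y_1]$, by comparison with the area of $[x_1^n,y_1^n]$ over $\langle x_1, y_1 \mid [x_1,y_1]\rangle$. The extra factor of $n$ is global (tree-crossing) information that no cell-additive invariant supplies --- and your $w_n$, a product of two commutators with no $(uv)^n$-type alternation across the amalgam, is not structured to exploit it. To salvage your approach, replace the flux construction by the splitting-plus-distortion argument and use a test word of the form $[w,(uv)^n]$.
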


Note that Theorem \ref{thm1} makes no reference to a specific
presentation of $K$ since, as is well known, the Dehn function of a
group is independent (up to $\simeq$-equivalence) of the
presentation chosen.  We refer the reader to Section 2 for
background on Dehn functions, including definitions of the symbols
$\succeq$ and $\simeq$.

The organisation of this paper is as follows.  Section 2 gives basic
definitions concerning Dehn  functions, van Kampen diagrams and
Cayley complexes.  We expect that the reader will already be
familiar with these concepts; the purpose of the exposition is
principally to introduce notation.  In Section 3 we define a class
of subgroups of direct products of free groups of which $K$ is a
member.  The subsequent section gives finite generating sets for
those groups in the class which are finitely generated and Section 5
proves a splitting theorem which shows how certain groups in the class
decompose as amalgamated products.  We then prove in Section 6 how,
in certain circumstances, the distortion of the subgroup $H$ in an
amalgamated product $\Gamma = G_1 \ast_H G_2$ gives rise to a lower
bound on the Dehn function of $\Gamma$. Theorem \ref{thm1} follows
as a corollary of this result when applied to the splitting of $K$
given in Section 5.

\section{Dehn Functions} \label{sec1}

In this section we recall the basic definitions concerning Dehn
functions of finitely presented groups.  All of this material is standard.
For further background and
a more thorough exposition, including proofs, see, for example,
\cite{brid02} or \cite{Riley1}.

Given a set $\mc{A}$, write $\mc{A}^{-1}$ for the set of formal
inverses of the elements of $\mc{A}$ and write $\mc{A}^{\pm1}$ for
the set $\mc{A} \cup \mc{A}^{-1}$.  Denote by $\mc{A}^{\pm\bast}$
the free monoid on the set $\mc{A}^{\pm1}$.  We refer to elements of
$\mc{A}^{\pm\bast}$ as words in the letters $\mc{A}^{\pm1}$ and
write $|w|$ for the length of such a word $w$. Given words $w_1, w_2
\in \fm{A}$ we write $w_1 \FreeEq w_2$ if $w_1$ and $w_2$ are freely
equal and $w_1 \equiv w_2$ if $w_1$ and $w_2$ are equal as elements
of $\fm{A}$.

\begin{defn} \label{def1}
  Let $\mc{P} = \langle \mc{A} \, | \mc{R} \rangle$ be a finite presentation of a group $G$.
  A word $w \in \mc{A}^{\pm\bast}$ is said to be
  \emph{null-homotopic} over $\mc{P}$ if it represents the
  identity in $G$.  A \emph{null-$\mc{P}$-expression} for such a
  word is a sequence $(x_i, r_i)_{i=1}^m$ in $\fm{A} \times
  \mc{R}^{\pm1}$ such that \[w \FreeEq \prod_{i=1}^m x_i r_i
  x_i^{-1}.\]  Define the \emph{area} of a null-$\mc{P}$-expression to be the integer $m$
  and define the \emph{$\mc{P}$-area} of $w$,
  written $\Area_\mc{P}(w)$, to be the minimal area taken over all
  null-$\mc{P}$-expressions for $w$.

  The Dehn function of the presentation $\mc{P}$, written $\delta_\mc{P}$,
  is defined to be the function $\N \rightarrow \N$ given by \[\delta_\mc{P}(n)
  = \max \{ \Area_\mc{P}(w) \, : \, \text{$w \in \fm{A}$, $w$ null-homotopic,
  $|w| \leq n$} \}.\]
\end{defn}

Although the Dehn functions of different finite presentations of a
fixed group may differ, their asymptotic behaviour will be the same.
This is made precise in the following way.

\begin{defn}
  Given functions $f, g : \N \rightarrow \N$, write $f \preceq g$ if
  there exists a constant $C > 0$ such that $f(n) \leq Cg(Cn + C) +
  Cn + C$ for all $n$.  Write $f \simeq g$ if $f \preceq g$ and
  $g \preceq f$.
\end{defn}

\begin{lem}
  If $\mc{P}_1$ and $\mc{P}_2$ are finite presentations of the
  same group then $\delta_{\mc{P}_1} \simeq \delta_{\mc{P}_2}$.
\end{lem}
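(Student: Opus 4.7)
The plan is to establish $\delta_{\mc{P}_1} \preceq \delta_{\mc{P}_2}$; the reverse inequality will follow by symmetry. Write $\mc{P}_i = \langle \mc{A}_i \mid \mc{R}_i \rangle$. First I would set up translation maps between the two free monoids. Since both presentations define the same group $G$, each $a \in \mc{A}_1$ is represented by some word $\phi(a) \in \fm{A}_2$, and each $b \in \mc{A}_2$ by some word $\psi(b) \in \fm{A}_1$; extending so that $\phi(a^{-1}) \equiv \phi(a)^{-1}$ and $\psi(b^{-1}) \equiv \psi(b)^{-1}$ yields monoid maps $\phi : \fm{A}_1 \to \fm{A}_2$ and $\psi : \fm{A}_2 \to \fm{A}_1$. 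The strategy will be: given a null-homotopic $w \in \fm{A}_1$, translate it to $\phi(w) \in \fm{A}_2$, invoke the $\mc{P}_2$-area bound, pull the resulting null-expression back through $\psi$ to obtain a null-$\mc{P}_1$-expression for $\psi\phi(w)$, and then correct from $\psi\phi(w)$ back to $w$ letter by letter.

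Because all of $\mc{A}_i$ and $\mc{R}_i$ are finite, I would record three constants. Let $C_0 = \max \{ |\phi(a)| : a \in \mc{A}_1^{\pm 1} \}$, let $K_1 = \max \{ \Area_{\mc{P}_1}(\psi(r)) : r \in \mc{R}_2^{\pm 1} \}$, and let $K_2 = \max \{ \Area_{\mc{P}_1}(a \cdot \psi\phi(a)^{-1}) : a \in \mc{A}_1^{\pm 1} \}$. The constants $K_1$ and $K_2$ are finite because each $\psi(r)$ and each $a \cdot \psi\phi(a)^{-1}$ represents the identity of $G$, hence is null-homotopic over $\mc{P}_1$.

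Now for the main estimate: given null-homotopic $w \in \fm{A}_1$ with $|w| \leq n$, one has $|\phi(w)| \leq C_0 n$, so there is a null-$\mc{P}_2$-expression $(x_i, r_i)_{i=1}^m$ for $\phi(w)$ with $m \leq \delta_{\mc{P}_2}(C_0 n)$. Applying $\psi$ term by term and splicing in a null-$\mc{P}_1$-expression for each $\psi(r_i)$, conjugated by $\psi(x_i)$, produces a null-$\mc{P}_1$-expression for $\psi\phi(w)$ of area at most $K_1 m$. A telescoping argument over the letters of $w \equiv a_1 \cdots a_n$ then expresses $w \cdot \psi\phi(w)^{-1}$ as a product of at most $n$ conjugates of the words $a_j \cdot \psi\phi(a_j)^{-1}$, contributing area at most $K_2 n$. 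Combining these yields
\[
\Area_{\mc{P}_1}(w) \leq K_1 \, \delta_{\mc{P}_2}(C_0 n) + K_2 n,
\]
which delivers $\delta_{\mc{P}_1} \preceq \delta_{\mc{P}_2}$ upon taking $C = \max(K_1, K_2, C_0)$.

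The main obstacle I anticipate is purely bookkeeping: the definition of a null-$\mc{P}$-expression demands free equality (not literal equality) of words in $\fm{A}$, so one must verify that the translated, spliced, and telescoped expression assembles into a single valid null-$\mc{P}_1$-expression whose product is freely equal to $w$. This will follow from $\psi$ being a monoid map respecting formal inversion, but writing the concatenation down carefully is the most finicky part of the argument.
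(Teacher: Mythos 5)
Your proposal is correct and is essentially the standard change-of-presentation argument: the paper itself offers no proof of this lemma, deferring to Bridson's notes (\cite[Proposition~1.3.3]{brid02}), and the translation-maps-plus-telescoping scheme you describe, with the three constants $C_0$, $K_1$, $K_2$ controlling word stretching, relator translation, and the letter-by-letter correction from $\psi\phi(w)$ back to $w$, is precisely the proof given there. The final bound $\Area_{\mc{P}_1}(w) \leq K_1\,\delta_{\mc{P}_2}(C_0 n) + K_2 n$ does fit the paper's definition of $\preceq$ (using that $\delta_{\mc{P}_2}$ is nondecreasing), and the bookkeeping worry you flag is handled exactly as you anticipate, since $\psi$ is a monoid map respecting formal inverses and hence preserves free equality.
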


For a proof of this standard result see, for example, \cite[Proposition~1.3.3]{brid02}.

A useful tool for the study of Dehn functions is
a class of objects known as van Kampen diagrams.
Roughly speaking, these are planar CW-complexes which portray
diagrammatically schemes for reducing null-homotopic words to the
identity.  Such diagrams, whose definition is recalled below, allow the application of topological
methods to the calculation of Dehn functions. For background and further details see, for example, \cite[Section~4]{brid02}.  For the definition of a combinatorial CW-complex see, for example, \cite[Appendix~A]{brid02}.

\begin{defn}
  A \emph{singular disc diagram} $\Delta$ is a finite, planar,
  contractible combinatorial CW-complex with a specified base vertex
  $\star$ in its boundary.  The \emph{area} of $\Delta$, written
  $\Area(\Delta)$, is defined to be the number of $2$-cells of which
  $\Delta$ is composed.  The \emph{boundary cycle} of $\Delta$ is
  the edge loop in $\Delta$ which starts at $\star$ and traverses
  $\partial \Delta$ in the anticlockwise direction.  The
  interior of $\Delta$ consists of a number of disjoint open
  $2$-discs, the closures of which are called the \emph{disc components} of $\Delta$.

  Each $1$-cell of $\Delta$ has associated to it two directed
  edges $\epsilon_1$ and $\epsilon_2$, with $\epsilon_1^{-1} =
  \epsilon_2$.  Let $\DEdge(\Delta)$ be the set of directed edges
  of $\Delta$.  A \emph{labelling} of $\Delta$ over a set $\mc{A}$ is
  a map $\lambda : \DEdge(\Delta) \rightarrow \mc{A}^{\pm1}$ such that
  $\lambda(\epsilon^{-1}) = \lambda(\epsilon)^{-1}$.  This induces a
  map from the set of edge paths in $\Delta$ to $\fm{A}$.
  The \emph{boundary label} of $\Delta$ is the word in $\fm{A}$
  associated to the boundary cycle.

  Let $\mc{P} = \langle \mc{A} \, | \, \mc{R} \rangle$ be a finite
  presentation.  A \emph{$\mc{P}$-van Kampen diagram} for a word $w
  \in \fm{A}$ is a singular disc diagram $\Delta$ labelled over $\mc{A}$ with
  boundary label $w$ and such that for each $2$-cell $c$ of $\Delta$
  the anticlockwise edge loop given by the attaching map of $c$,
  starting at some vertex in $\partial c$, is labelled by a word in
  $\mc{R}^{\pm1}$.
\end{defn}

\begin{lem}[Van Kampen's Lemma]
  A word $w \in \fm{A}$ is null-homotopic over $\mc{P}$ if and only if there
  exists a $\mc{P}$-van Kampen diagram for $w$.  In this case the
  $\mc{P}$-area of $w$ is the minimal area over all $\mc{P}$-van
  Kampen diagrams for $w$.
\end{lem}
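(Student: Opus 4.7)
The plan is to prove the two directions separately while tracking the correspondence between the area $m$ of a null-$\mc{P}$-expression and the area of a van Kampen diagram, so that the stated equality of minimal areas falls out as a byproduct. The two directions are essentially inverse constructions: a ``cactus'' construction that turns a null-expression into a diagram of the same area, and a ``spanning tree'' construction that extracts a null-expression from a diagram of the same area.

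For the forward direction, suppose $(x_i, r_i)_{i=1}^m$ is a null-$\mc{P}$-expression for $w$, so $w \FreeEq \prod_{i=1}^m x_i r_i x_i^{-1}$. For each $i$ I would build a singular lollipop diagram $L_i$: an arc labelled by $x_i$ running from a chosen base vertex to a $2$-cell whose boundary cycle, read anticlockwise from the tip of the arc, spells $r_i$. Wedging $L_1, \dots, L_m$ at their basepoints in order produces a singular disc diagram $\Delta_0$ of area exactly $m$ whose boundary cycle reads $\prod x_i r_i x_i^{-1}$. The boundary label of $\Delta_0$ is only freely equal to $w$, not literally equal, so I need to perform a sequence of elementary moves (folding adjacent inverse boundary edges and deleting resulting spurs) that realise the free reduction on the boundary; these moves never create new $2$-cells, so the resulting diagram $\Delta$ is a $\mc{P}$-van Kampen diagram for $w$ with $\Area(\Delta) \leq m$.

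For the reverse direction, suppose $\Delta$ is a $\mc{P}$-van Kampen diagram for $w$ with $\Area(\Delta) = m$. I would choose a spanning tree $T$ of the $1$-skeleton of $\Delta$. For each $2$-cell $c_i$ pick a vertex $v_i \in \partial c_i$, let $x_i \in \fm{A}$ be the word labelling the unique reduced edge path from $\star$ to $v_i$ in $T$, and let $r_i \in \mc{R}^{\pm 1}$ be the word labelling the anticlockwise boundary cycle of $c_i$ starting at $v_i$. I would then prove by induction on $m$ that $w \FreeEq \prod_{i=1}^m x_i r_i x_i^{-1}$ for a suitable ordering of the $2$-cells: the base case $m=0$ forces $\Delta$ to be a tree, so its boundary word is freely trivial, which must therefore equal $w$; the inductive step peels an outermost $2$-cell $c$ off $\partial \Delta$, applies induction to the smaller diagram, and reinserts the conjugate $x r x^{-1}$ corresponding to $c$. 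This yields a null-$\mc{P}$-expression of area exactly $m$.

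Combining the two constructions shows that the minimal area of a null-$\mc{P}$-expression equals the minimal area of a $\mc{P}$-van Kampen diagram, which gives both halves of the lemma. The main technical obstacle is the bookkeeping in the singular setting: spurs and multiple disc components joined by arcs must be handled explicitly both when reducing the cactus diagram's boundary in the forward direction and when choosing an ``outermost'' $2$-cell to peel in the reverse direction. In both places the underlying topology is clear, but one must verify that the chosen moves preserve the area count and the boundary label up to free equality, rather than relying on results about non-singular disc diagrams.
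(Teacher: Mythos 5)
The paper does not prove this lemma at all: it is stated as standard background, with the proof deferred to \cite[Theorem~4.2.2]{brid02}. So the comparison here is against the standard argument, which your two constructions (cactus/lollipop one way, inductive peeling the other) do follow in outline, and your accounting of areas --- exactly $m$ from a diagram, at most $m$ from a null-expression --- is the right way to get the equality of minimal areas.

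There is, however, one step in your reverse direction that does not work as stated. You fix a spanning tree $T$ in advance, declare $x_i$ to be the $T$-path from $\star$ to an arbitrarily chosen vertex $v_i \in \partial c_i$, and then claim the peeling induction proves $w \FreeEq \prod_{i=1}^m x_i r_i x_i^{-1}$ for a suitable ordering. But the induction does not deliver tree paths: when you peel an outermost $2$-cell $c$ whose boundary shares an edge with $\partial\Delta$, writing $w \equiv \alpha e \beta$ and $\partial c = e\rho$, you get $w \FreeEq \bigl(\alpha(e\rho)\alpha^{-1}\bigr)\bigl(\alpha\rho^{-1}\beta\bigr)$, so the conjugator for $c$ is the boundary path $\alpha$, and the conjugators for the remaining cells come from recursing on the diagram with boundary $\alpha\rho^{-1}\beta$. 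Replacing such a conjugator by the $T$-path with the same endpoints changes the element $x_i r_i x_i^{-1}$ of the free group (the two paths are homotopic in $\Delta$ but their labels differ by a word that is only trivial modulo the relators), so the exact free equality with the same count $m$ is no longer automatic; choosing the ordering does not obviously repair this for an arbitrary tree and arbitrary $v_i$. The fix is simply to drop the spanning tree and let the induction generate its own conjugators and base vertices, which is all the lemma needs. Two smaller points: in the forward direction, free equality of the cactus boundary with $w$ may require expansions as well as reductions when $w$ is unreduced, so besides folding and spur deletion you also need spur \emph{insertion} (which likewise preserves area); and in the peeling step you should note why every disc component with a $2$-cell has a $2$-cell meeting its boundary in an edge --- this holds because every boundary edge of a disc component lies in the closure of some $2$-cell.
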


For a proof of this result see, for example, \cite[Theorem~4.2.2]{brid02}.

Associated to a presentation $\mc{P} = \langle \mc{A} \, | \, \mc{R}
\rangle$ of a group $G$ there is a standard combinatorial
$2$-complex $K_\mc{P}$ with $\pi_1(K_\mc{P}) \cong G$.  The complex
$K_\mc{P}$ is constructed by taking a wedge of copies of $S^1$
indexed by the letters in $\mc{A}$ and attaching $2$-cells indexed
by the relations in $\mc{R}$.  The $2$-cell corresponding to a
relation $r \in \mc{R}$ has $|r|$ edges and is attached by
identifying its boundary circuit with the edge path in $K_\mc{P}^1$
along which one reads the word $r$.

The Cayley $2$-complex associated to $\mc{P}$, denoted by
$Cay^2(\mc{P})$, is defined to be the universal cover of $K_\mc{P}$.
If one chooses a base vertex of $Cay^2(\mc{P})$ to represent the
identity element of $G$ then the $1$-skeleton of this complex is
canonically identified with the Cayley graph of $\mc{P}$.  Given a
$\mc{P}$-van Kampen diagram $\Delta$ there is a unique
label-preserving combinatorial map from $\Delta$ to $Cay^2(\mc{P})$
which maps the base vertex of $\Delta$ to the vertex of
$Cay^2(\mc{P})$ representing the identity.

\section{A Class of Subgroups of Direct Products of Free Groups}

In this section we introduce a class of subgroups of direct products
of free groups of which the group $K$ defined in the introduction
will be a member.  We first fix some notation which will be used
throughout the paper. Given integers $i, m \in \N$, let $F^{(i)}_m$
be the rank $m$ free group with basis $e^{(i)}_1, \ldots,
e^{(i)}_m$.  Given an integer $r \in \N$, let $\Z^r$ be the rank $r$
free abelian group with basis $t_1, \ldots, t_r$.

Given positive integers $n, m \geq 1$ and $r \leq m$, we wish to
define a group $K^n_m(r)$ to be the kernel of a homomorphism
$\theta: F^{(1)}_m \times \ldots \times F^{(n)}_m \rightarrow \Z^r$
whose restriction to each factor $F^{(i)}_m$ is surjective.  For
fixed $n$, $m$ and $r$, the isomorphism class of the group
$K^n_m(r)$ is, up to an automorphism of the factors of the ambient
group $F^{(1)}_m \times \ldots \times F^{(n)}_m$, independent of the
homomorphism $\theta$. This is proved by the following lemma.

\begin{lem} \label{lem1}
Let $F$ be a rank $m$ free group.  Given a surjective homomorphism
$\phi : F \rightarrow \Z^r$, there exists a basis $e_1, \ldots, e_m$
of $F$ so that
\begin{equation*} \phi(e_i) = \begin{cases} t_i & \text{if $1 \leq
i \leq r$,} \\ 0 & \text{if $r+1 \leq i \leq m$.} \end{cases}
\end{equation*}
\end{lem}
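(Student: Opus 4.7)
The natural plan is to reduce the question to the analogous statement about free abelian groups and then lift back to $F$ using a classical result about the automorphism group of a free group.

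First I would consider the abelianisation map $\pi : F \to F^{\mathrm{ab}} \cong \Z^m$ and observe that since the target $\Z^r$ of $\phi$ is abelian, $\phi$ factors through $\pi$ as $\phi = \bar\phi \circ \pi$ for a unique homomorphism $\bar\phi : \Z^m \to \Z^r$. Surjectivity of $\phi$ forces surjectivity of $\bar\phi$.

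Next I would solve the problem inside $\Z^m$. Since $\bar\phi$ is a surjection of free abelian groups, the short exact sequence
\begin{equation*}
0 \to \ker(\bar\phi) \to \Z^m \to \Z^r \to 0
\end{equation*}
splits, so picking a section $s : \Z^r \to \Z^m$ and a basis $a_{r+1},\ldots,a_m$ of $\ker(\bar\phi)$ gives a basis $a_1 := s(t_1),\ldots,a_r := s(t_r),a_{r+1},\ldots,a_m$ of $\Z^m$ on which $\bar\phi$ acts by the prescribed formula. (Alternatively, one can run Smith normal form on the matrix of $\bar\phi$ in any basis and note that all elementary divisors must equal $1$ because $\bar\phi$ is surjective.)

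To finish, I would invoke the classical theorem of Nielsen that the abelianisation map $\Aut(F) \to GL_m(\Z)$ is surjective: starting from any basis $f_1,\ldots,f_m$ of $F$ and the matrix in $GL_m(\Z)$ expressing $a_1,\ldots,a_m$ in terms of $\pi(f_1),\ldots,\pi(f_m)$, there is an automorphism $\alpha \in \Aut(F)$ whose abelianisation realises this change of basis. Then $e_i := \alpha(f_i)$ is a basis of $F$ with $\pi(e_i) = a_i$, so $\phi(e_i) = \bar\phi(a_i)$ has the required values. The main (and only) non-trivial ingredient is the surjectivity of $\Aut(F) \to GL_m(\Z)$; everything else is routine linear algebra over $\Z$.
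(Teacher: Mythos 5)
Your proposal is correct and follows essentially the same route as the paper: factor $\phi$ through the abelianisation, choose a basis of $\Z^m$ adapted to $\bar\phi$ via the split short exact sequence, and lift the resulting change of basis to an automorphism of $F$. The only cosmetic difference is that you cite Nielsen's theorem that $\Aut(F) \rightarrow GL_m(\Z)$ is surjective, whereas the paper proves this on the spot by noting that $GL_m(\Z)$ is generated by elementary transformations, each of which visibly lifts.
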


\begin{proof}
The homomorphism $\phi$ factors through the abelianisation homomorphism $\Ab : F
\rightarrow A$, where $A$ is the rank $m$ free abelian group $F /
[F, F]$, as $\phi = \bar{\phi} \circ \Ab$ for some homomorphism
$\bar{\phi} : A \rightarrow \Z^r$. Since $\bar{\phi}$ is surjective,
$A$ splits as $A_1 \oplus A_2$ where $\bar{\phi}$ is an isomorphism
on the first factor and $0$ on the second factor. There thus exists
a basis $s_1, \ldots, s_m$ for $A$ so as
\begin{equation*} \bar{\phi}(s_i) = \begin{cases} t_i & \text{if
$1 \leq i \leq r$,} \\ 0 & \text{if $r+1 \leq i \leq m$.}
\end{cases} \end{equation*}

We claim that the $s_i$ lift under $\Ab$ to a basis for $F$.  To see
this let $f_1, \ldots , f_m$ be any basis for $F$ and let
$\bar{f}_1, \ldots, \bar{f}_m$ be its image under $\Ab$, a basis for
$A$.  Let $\rho \in \Aut(A)$ be the change of basis isomorphism from
$\bar{f}_1, \ldots, \bar{f}_m$ to $s_1, \ldots, s_m$.  It suffices
to show that this lifts under $\Ab$ to an automorphism of $F$.  But
this is certainly the case since $\Aut(A) \cong GL_m(\Z)$ is
generated by the elementary transformations and each of these
obviously lifts to an automorphism.
\end{proof}

\begin{defn} \label{def2}
For integers $n, m \geq 1$ and $r \leq m$, define $K^n_m(r)$ to be
the kernel of the homomorphism $\theta: F^{(1)}_m \times \ldots
\times F^{(n)}_m \rightarrow \Z^r$ given by
\begin{equation*} \theta(e^{(i)}_j) =
\begin{cases} t_j & \text{if $1 \leq j \leq r$,} \\ 0 & \text{if $r+1
\leq j \leq m$.}
\end{cases} \end{equation*}
\end{defn}

Note that $K^n_2(1)$ is the $n^\text{th}$ Stallings-Bieri group
${\rm SB}_n$.

By a result in Section 1.6 of \cite{mein94}, if $r \geq 1$ and $m
\geq 2$ then $K^n_m(r)$ is of type $\mc{F}_{n-1}$ but not of type
$\mc{FP}_{n}$. In particular the group $K \cong K^3_2(2)$ defined in
the introduction is finitely presented.

\section{Generating Sets}

We give finite generating sets for those groups $K^n_m(r)$ which are finitely generated.  We make use of the following notational
shorthand: given formal symbols $x$ and $y$, write $[x, y]$ for $x y x^{-1} y^{-1}$ and $x^y$ for $y x y^{-1}$.

\begin{prop} \label{prop1}
If $n \geq 2$ then $K^n_m(r)$ is generated by $S_1
\cup S_2 \cup S_3$ where \begin{align*} S_1 &=
\{e_i^{(1)} \big(e_i^{(j)}\big)^{-1} \, : \, 1 \leq i \leq r , 2 \leq j \leq
n\}, \\ S_2 &= \{e_i^{(j)} \, : \, r+1 \leq i \leq m, 1 \leq j \leq
n\}, \\ S_3 &= \{\big[e_i^{(1)}, e_j^{(1)}\big] \, : \, 1 \leq i < j \leq r
\}. \end{align*}
\end{prop}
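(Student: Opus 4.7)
The plan is to prove both inclusions.  The inclusion $\langle S_1 \cup S_2 \cup S_3 \rangle \subseteq K^n_m(r)$ is immediate on applying $\theta$ to each generator, so the substance lies in the reverse inclusion.  Set $L = \langle S_1 \cup S_2 \cup S_3 \rangle$.

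Consider the projection
\[
\pi : F^{(1)}_m \times \cdots \times F^{(n)}_m \longrightarrow F^{(2)}_m \times \cdots \times F^{(n)}_m
\]
off the first factor.  Because $\theta|_{F^{(1)}_m}$ is surjective, $\pi|_{K^n_m(r)}$ is surjective with kernel $\{(w,1,\ldots,1) : w \in \ker(\theta|_{F^{(1)}_m})\}$.  It therefore suffices to check (a) $\pi(L) = F^{(2)}_m \times \cdots \times F^{(n)}_m$ and (b) $L$ contains every $(w,1,\ldots,1)$ with $w \in \ker(\theta|_{F^{(1)}_m})$.  Part (a) is straightforward: $\pi$ sends $e^{(1)}_i(e^{(j)}_i)^{-1}$ to $(e^{(j)}_i)^{-1}$ in factor $j$ and sends each $e^{(j)}_i \in S_2$ with $j \geq 2$ to itself, and these images together generate the target.

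Part (b) is the main obstacle, and the key observation is a conjugation trick that simulates conjugation by first-factor elements.  For $k \leq r$, conjugating a first-factor-only element $(x,1,\ldots,1) \in L$ by $e^{(1)}_k(e^{(2)}_k)^{-1} \in S_1$ yields $(e^{(1)}_k x (e^{(1)}_k)^{-1}, 1, \ldots, 1)$, since the $k$-th factor contribution $(e^{(2)}_k)^{-1}\cdot 1 \cdot e^{(2)}_k$ collapses to $1$.  For $k > r$, the generator $e^{(1)}_k \in S_2$ already performs this conjugation directly.  (The hypothesis $n \geq 2$ enters here to ensure $S_1$ is available when $r \geq 1$.)  Iterating, the set of first-factor-only elements of $L$ is closed under conjugation by arbitrary elements of $F^{(1)}_m$.

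Applying this closure to $[e^{(1)}_i, e^{(1)}_j] \in S_3$ (for $i<j \leq r$) and $e^{(1)}_k \in S_2$ (for $k>r$) shows that $L$ contains, viewed in the first factor, every element of the normal closure $N$ in $F^{(1)}_m$ of the union of these generators.  But $F^{(1)}_m / N$ is free abelian on the images of $e^{(1)}_1, \ldots, e^{(1)}_r$, and $\theta|_{F^{(1)}_m}$ factors through this quotient as an isomorphism; hence $N = \ker(\theta|_{F^{(1)}_m})$.  This establishes (b) and completes the proof.
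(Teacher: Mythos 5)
Your proof is correct and takes essentially the same route as the paper: project onto the other $n-1$ factors, check the generators surject there, and use the ``padded conjugator'' trick (conjugating first-factor elements by $e_k^{(1)}\big(e_k^{(2)}\big)^{-1}$ to simulate conjugation by $e_k^{(1)}$) to capture the kernel of the projection as a normal closure. The only difference is cosmetic: you verify explicitly that this normal closure equals $\ker\big(\theta|_{F^{(1)}_m}\big)$ via the quotient $F^{(1)}_m/N \cong \Z^r$, a step the paper simply asserts.
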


\begin{proof}
Partition $S_2$ as $S_2' \cup S_2''$ where $$S_2' = \{e_i^{(j)} \, :
\, r+1 \leq i \leq m, 2 \leq j \leq n\}$$ and $$S_2'' = \{e_i^{(1)}
\, : \, r+1 \leq i \leq m\}.$$  Project $K^n_m(r) \leq F^{(1)}_m
\times \ldots \times F^{(n)}_m$ onto the last $n-1$ factors to give
the short exact sequence $1 \rightarrow K^1_m(r) \rightarrow K_m^n(r)
\rightarrow F^{(2)}_m \times \ldots \times F^{(n)}_m \rightarrow 1$.
Note that $S_1 \cup S_2'$ projects to a set of generators for
$F^{(2)}_m \times \ldots \times F^{(n)}_m$ and that $K^1_m(r)$ is
the normal closure in $F^{(1)}_m$ of $S_2'' \cup S_3$. If $\zeta \in
F^{(1)}_m$ and $w \equiv w(e_1^{(1)}, \ldots , e_m^{(1)})$ is a word
in the generators of $F^{(1)}_m$ then
$$\zeta^w = \zeta ^ {w\left(e_1^{(1)}(e_1^{(2)})^{-1}, \ldots,
e_m^{(1)}(e_m^{(2)})^{-1}\right)}.$$ Thus $S_1 \cup S_2' \cup S_2''
\cup S_3$ generates $K^n_m(r)$.
\end{proof}

\section{A Splitting Theorem}

The following result gives an amalgamated product decomposition of the groups $K^n_m(r)$ in the case that $r=m$.  Note that with slightly more work one could prove a more general result without this restriction.  We introduce the following notation: given a collection of groups $M, L_1, \ldots, L_k$ with $M \leq L_i$
for each $i$, we denote by $\hast_{i=1}^k(L_i \, ; \, M)$ the
amalgamated product $L_1 \ast_M \ldots \ast_M L_k$.

\begin{thm} \label{thm2}
If $n \geq 2$ and $m \geq 1$ then $K^n_m(m) \cong \hast_{k=1}^m(L_k \, ; \, M)$, where $M = K^{n-1}_m(m)$ and, for each $k=1, \ldots, m$, the group $L_k \cong K^{n-1}_m(m-1)$ is the kernel of the
homomorphism $$\theta_k : F^{(1)}_m \times \ldots \times F^{(n-1)}_m
\rightarrow \Z^{m-1}$$ given by
\begin{equation*} \theta_k(e_j^{(i)}) =
\begin{cases} t_j & \text{if $1 \leq j \leq k-1$,} \\
0  & \text{if $j = k$,} \\ t_{j-1} & \text{if $k+1 \leq j \leq m$.}
\end{cases}
\end{equation*}
\end{thm}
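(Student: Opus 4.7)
\emph{Plan.} I would project onto the last free-group factor and exploit the free-product decomposition of $F^{(n)}_m$ to convert a semidirect-product structure into the desired amalgamated product.

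First, let $p\colon K^n_m(m) \to F^{(n)}_m$ be the restriction to $K^n_m(m)$ of the projection onto the last factor. Surjectivity of $p$ is immediate: given $g_n\in F^{(n)}_m$, choose any $g_{n-1}\in F^{(n-1)}_m$ whose abelianisation cancels that of $g_n$ and pad the remaining coordinates with identities. The kernel of $p$ consists of the elements of $K^n_m(m)$ with trivial final coordinate, which is precisely $M=K^{n-1}_m(m)$ embedded in the first $n-1$ factors. Since $F^{(n)}_m$ is free, the resulting short exact sequence splits, via the section defined on generators by $\sigma(e_k^{(n)}) = \bigl((e_k^{(1)})^{-1},1,\ldots,1,e_k^{(n)}\bigr)$. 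This gives $K^n_m(m)\cong M\rtimes F^{(n)}_m$.

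Now view $F^{(n)}_m$ as the free product $\langle e_1^{(n)}\rangle \ast \cdots \ast \langle e_m^{(n)}\rangle$. I would then invoke the following general fact: if a group $N$ is acted upon by a free product $F_1\ast\cdots\ast F_m$, then $N\rtimes(F_1\ast\cdots\ast F_m)\cong\hast_{k=1}^m(N\rtimes F_k\,;\,N)$. This is a presentation-level check: both sides are generated by $N$ together with the $F_k$, with the same relations (those of $N$, those of each $F_k$, and the conjugation action of each $F_k$ on $N$) and no cross-relations between the different $F_k$. Applying this with $N=M$ yields $K^n_m(m)\cong\hast_{k=1}^m(M\rtimes\langle e_k^{(n)}\rangle\,;\,M)$.

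It then remains to identify $M\rtimes\langle e_k^{(n)}\rangle$, viewed as $p^{-1}(\langle e_k^{(n)}\rangle)\leq K^n_m(m)$, with $L_k$. Its elements are tuples $(g_1,\ldots,g_{n-1},(e_k^{(n)})^a)$ for which $\sum_{i<n}\Ab(g_i) + a\cdot t_k = 0$, that is, the sum has $j$-th coordinate equal to $0$ for every $j\ne k$. The projection forgetting the last coordinate sends this bijectively onto the subset of $F^{(1)}_m\times\cdots\times F^{(n-1)}_m$ cut out by precisely the $m-1$ vanishing conditions defining $L_k=\ker\theta_k$, with the exponent $a$ determined as $-\sum_i(\Ab(g_i))_k$. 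I expect the main obstacle to be this final bookkeeping: verifying that the isomorphism respects the embedding of $M$ as the common amalgamated subgroup of each $L_k$, so that the abstract amalgamated product above genuinely matches the one in the statement.
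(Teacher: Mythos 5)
Your proposal is correct and follows essentially the same route as the paper's proof: project onto $F^{(n)}_m$, split the resulting extension by a section landing in two factors, use the decomposition $F^{(n)}_m \cong \langle e_1^{(n)}\rangle \ast \cdots \ast \langle e_m^{(n)}\rangle$ to convert the semidirect product into the amalgam $\hast_{k=1}^m\big(M \rtimes \langle \sigma(e_k^{(n)})\rangle \, ; \, M\big)$, and identify each vertex group with $L_k$ by an abelianisation computation. The only differences are cosmetic (the paper's section uses $e_k^{(n-1)}\big(e_k^{(n)}\big)^{-1}$ rather than your $\big((e_k^{(1)})^{-1},1,\ldots,1,e_k^{(n)}\big)$, and it identifies the vertex groups via the retraction $p_k$ instead of your coordinate-forgetting map), and the final bookkeeping you flag is immediate, since forgetting the last coordinate restricts to the identity on $M$.
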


\begin{proof}
Projecting $K^n_m(m)$ onto the factor $F^{(n)}_m$ gives the short
exact sequence $1 \rightarrow K^{n-1}_m(m) \rightarrow K^n_m(m)
\rightarrow F^{(n)}_m \rightarrow 1$.  This splits to show that
$K^n_m(m)$ has the structure of an internal semidirect product $M
\rtimes \hat{F}^{(n)}_m$ where $\hat{F}^{(n)}_m \cong F^{(n)}_m$ is
the subgroup of $F^{(n-1)}_m \times F^{(n)}_m$ generated by
$$e_1^{(n-1)}\big(e_1^{(n)}\big)^{-1}, \, \ldots, \,
e_m^{(n-1)}\big(e_m^{(n)}\big)^{-1}.$$ Since the action by conjugation of
$e_k^{(n-1)}\big(e_k^{(n)}\big)^{-1}$ on $M$ is the same as the action of
$e_k^{(n-1)}$, we have that
\begin{align*}K^n_m(m) &= M \rtimes
\hat{F}^{(n)}_m \\
&\cong \hast_{k=1}^m \Big(M \rtimes \Big\langle
e_k^{(n-1)}\big(e_k^{(n)}\big)^{-1} \Big\rangle \, ; \, M \Big) \\
&\cong \hast_{k=1}^m \Big(M \rtimes \Big\langle e_k^{(n-1)}
\Big\rangle \, ; \, M \Big).
\end{align*}

Define a homomorphism $p_k : F^{(1)}_m \times \ldots \times
F^{(n-1)}_m \rightarrow \Z$ by
\begin{equation*} p_k\big(e_j^{(i)}\big) = \begin{cases} 1 & \text{if
$j=k$,} \\ 0 & \text{otherwise,} \end{cases} \end{equation*} and
note that $L_k \cap \ker p_k$ is the kernel $K^{n-1}_m(m)$ of the standard
homomorphism $\theta : F^{(1)}_m \times \ldots \times F^{(n-1)}_m
\rightarrow \Z^m$ given in Definition \ref{def2}. Considering the
restriction of $p_k$ to $L_k$ gives the short exact sequence $1
\rightarrow K^{n-1}_m(m) \rightarrow L_k \rightarrow \Z \rightarrow
1$ which demonstrates that $L_k = K^{n-1}_m(m) \rtimes \langle
e_k^{(n-1)} \rangle$.
\end{proof}

\section{Dehn Functions of Amalgamated Products}

In this section we will be concerned with finitely presented
amalgamated products $\Gamma = G_1 \ast_H G_2$ where $H$, $G_1$ and $G_2$ are finitely generated groups and $H$ is proper in each $G_i$.  Suppose each $G_i$ is presented by
$\langle\mathcal{A}_i \, | \, \mathcal{R}_i \rangle$, with
$\mathcal{A}_i$ finite.  Note that we are at liberty to choose the
$\mc{A}_i$ so as each $a \in \mc{A}_i$ represents an element of $G_i
\smallsetminus H$.  Indeed, since $H$ is proper in $G_i$ there
exists some $a' \in \mc{A}_i$ representing an element of $G_i
\smallsetminus H$ and we can replace each other element $a \in
\mc{A}_i$ by $a' a$ if necessary.

Let $\mathcal{B}$ be a finite generating set for $H$ and for each $b
\in \mathcal{B}$ choose words $u_b \in \mathcal{A}_1^{\pm\bast}$ and
$v_b \in \mathcal{A}_2^{\pm\bast}$ which equal $b$ in $\Gamma$.
Define $\mc{E} \subset (\mc{A}_1 \cup \mc{A}_2 \cup
\mc{B})^{\pm\bast}$ to be the finite collection of words
$\{bu_b^{-1}, bv_b^{-1} \, : \, b \in \mc{B}\}$. Then, since
$\Gamma$ is finitely presented, there exist finite subsets
$\mathcal{R}_1' \subseteq \mathcal{R}_1$ and $\mathcal{R}_2'
\subseteq \mathcal{R}_2$ such that $\Gamma$ is finitely presented by
$$\mathcal{P} = \langle \mathcal{A}_1, \mathcal{A}_2,
\mathcal{B} \, | \, \mathcal{R}_1', \mathcal{R}_2', \mc{E} \rangle.
$$

\begin{thm} \label{thm3}
Let $w \in \mc{A}_1^{\pm\bast}$ be a word representing an element $h
\in H$ and let $u \in \mathcal{A}_1^{\pm\bast}$ and $v \in
\mathcal{A}_2^{\pm\bast}$ be words representing elements $\alpha \in
G_1 \smallsetminus H$ and $\beta \in G_2 \smallsetminus H$ respectively.  If
$[\alpha, h] = [\beta , h] = 1$ then
$$\Area_\mathcal{P}([w, (uv)^n]) \geq 2n \, {\rm d}_\mathcal{B}(1, h)$$ where ${\rm d}_\mathcal{B}$ is
the word metric on $H$ associated to the generating set
$\mathcal{B}$.
\end{thm}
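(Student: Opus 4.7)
The plan is to analyse a minimal-area $\mc{P}$-van Kampen diagram $\Delta$ for $[w,(uv)^n]$ and lower bound its area by counting its $\mc{B}$-labelled edges. Since the boundary word contains no letters of $\mc{B}^{\pm 1}$, every $\mc{B}$-edge of $\Delta$ is interior, and the only $2$-cells incident to a $\mc{B}$-edge are those whose defining relation lies in $\mc{E}$. Each such ``$\mc{E}$-cell'' has exactly one $\mc{B}$-edge on its boundary, while each interior $\mc{B}$-edge of $\Delta$ separates exactly two $\mc{E}$-cells. Writing $N_\mc{E}$ and $N_\mc{B}$ for the respective numbers of $\mc{E}$-cells and $\mc{B}$-edges, one has $N_\mc{E} = 2 N_\mc{B}$, and so $\Area(\Delta) \geq N_\mc{E} = 2 N_\mc{B}$. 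It therefore suffices to establish $N_\mc{B} \geq n\, {\rm d}_\mc{B}(1, h)$.

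I would then slit $\Delta$ along its $\mc{B}$-edges to decompose it into \emph{pieces}. Each piece $P$ is monochromatic of some type $i \in \{1,2\}$: its $2$-cells are $\mc{R}_i'$-cells together with $\mc{E}$-cells whose $\mc{A}_i$-side lies in $P$. The boundary of $P$ reads a cyclic word alternating between $\mc{A}_i$-segments (inherited from $\partial \Delta$) and $\mc{B}$-paths (the slits) which represents the identity in $G_i$. Reading off only the $\mc{A}_i$-letters of $\partial P$ in cyclic order yields an element $h_P \in G_i$, and since the $\mc{B}$-content of $\partial P$ represents $h_P^{-1}$, we conclude that $h_P \in H$ and that the total $\mc{B}$-length of $\partial P$ is at least ${\rm d}_\mc{B}(1,h_P)$. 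Summing over all pieces gives
\[ 2N_\mc{B} \;=\; \sum_P |\partial_\mc{B} P| \;\geq\; \sum_P {\rm d}_\mc{B}(1, h_P). \]

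The core remaining task, and the principal obstacle, is to prove $\sum_P {\rm d}_\mc{B}(1, h_P) \geq 2n\, {\rm d}_\mc{B}(1, h)$. The boundary $\partial \Delta$ has exactly $2n$ maximal $\mc{A}_1$-segments: one labelled $u^{-1} w u$ (representing $\alpha^{-1} h \alpha = h$, by $[\alpha,h] = 1$), one labelled $w^{-1}$ (representing $h^{-1}$), and $2n - 2$ labelled $u^{\pm 1}$ (each representing $\alpha^{\pm 1} \notin H$); analogous remarks apply to the $2n$ maximal $\mc{A}_2$-segments. The $h_P$'s are planar-cyclic subproducts of these labels, and the hope is to exploit the non-crossing ``planar partition'' structure that the pieces impose on the $4n$ boundary transitions, together with the $H$-membership constraint $h_P \in H$, to produce $2n$ summands each bounded below by ${\rm d}_\mc{B}(1, h)$. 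The delicate part will be ruling out ``cheap'' configurations in which a single large piece absorbs many boundary segments whose labels cancel algebraically in $H$ (giving $h_P$ trivial or short in $\mc{B}$): here, planarity of $\Delta$, the specific interleaving pattern of $u^{\pm 1}$'s with $v^{\pm 1}$'s around $\partial \Delta$, and the commutation relations $[\alpha,h] = [\beta,h] = 1$ must be combined, perhaps via a dual arc from the $w$-segment to the $w^{-1}$-segment counting transverse $\mc{B}$-edge crossings, to force the $2n$ contributions of size ${\rm d}_\mc{B}(1,h)$ not to conspire to disappear.
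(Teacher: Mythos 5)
Your opening reduction is sound, and in fact it reproduces the paper's closing bookkeeping: since $[w,(uv)^n]$ contains no $\mc{B}$-letters every $\mc{B}$-edge is interior, and since each relator of $\mc{P}$ contains at most one occurrence of a $\mc{B}$-letter the two sides of a $\mc{B}$-edge lie in two distinct $\mc{E}$-cells, giving $\Area(\Delta) \geq N_{\mc{E}} = 2N_{\mc{B}}$; so it does suffice to show $N_{\mc{B}} \geq n\,{\rm d}_{\mc{B}}(1,h)$. But from there the proposal has a genuine gap, in two respects. First, the piece analysis is wrong as stated: if $\partial P$ reads an alternating word $a_1 b_1 a_2 b_2 \cdots a_k b_k = 1$ in $G_i$, where the $b_j$ are the slit labels (words in $\mc{B}$, hence elements of $H$), then solving the relation expresses $a_1 \cdots a_k$ as a product of conjugates of the $b_j^{-1}$ by the prefixes $a_1 \cdots a_j$; since those prefixes generally represent elements of $G_i \smallsetminus H$, your element $h_P$ need not lie in $H$ at all, and even when it does its $\mc{B}$-length is not controlled by $\sum_j |b_j|$, so the inequality $2N_{\mc{B}} \geq \sum_P {\rm d}_{\mc{B}}(1,h_P)$ does not carry the meaning you need. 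Second, and decisively, the inequality $\sum_P {\rm d}_{\mc{B}}(1,h_P) \geq 2n\,{\rm d}_{\mc{B}}(1,h)$ --- which you correctly identify as the core task --- is exactly the content of the theorem, and you offer only a ``hope'' of proving it; nothing in the proposal rules out the ``cheap'' configurations you yourself describe.

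The missing idea is the paper's map to the Bass-Serre tree $T$ of the splitting $G_1 \ast_H G_2$. The paper constructs an equivariant map $f : Cay^2(\mc{P}) \rightarrow T$ sending each vertex $g$ to the midpoint of the edge $gH$, collapsing $\mc{B}$-edges, and sending each $\mc{A}_i$-edge to an embedded arc through the vertex $gG_i$ (this uses the normalisation that every $a \in \mc{A}_i$ represents an element of $G_i \smallsetminus H$), then extends over $2$-cells and composes with $\Delta \rightarrow Cay^2(\mc{P})$ to get $\bar f : \Delta \rightarrow T$. The hypotheses $[\alpha,h]=[\beta,h]=1$ give $\bar f(p_i) = \bar f(q_i) = (uv)^{i-1}u \cdot m$ for the $2n$ distinguished boundary vertices $p_i, q_i$, and the preimage $S$ of this point consists only of vertices and $\mc{B}$-edges. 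Because $u$ and $v$ map to opposite sides of $\bar f(p_i)$ in the tree, $S$ separates the boundary neighbours of $p_i$ within its disc component, forcing an edge path $\gamma_i$ in $S$ from $p_i$ to some other boundary vertex, and the normal form theorem for amalgamated products pins that endpoint down to be $q_i$ and makes the $\gamma_i$ pairwise disjoint. Each $\gamma_i$ is a $\mc{B}$-path whose label equals $h$ in $H$, so $|\gamma_i| \geq {\rm d}_{\mc{B}}(1,h)$, and your own doubling count then yields $\Area(\Delta) \geq \sum_{i=1}^n 2|\gamma_i| \geq 2n\,{\rm d}_{\mc{B}}(1,h)$. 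Your ``dual arc'' suggestion gestures at such a separation argument but is undeveloped; without it (or an equivalent device pinned to the specific points $p_i, q_i$), the proposal does not prove the theorem.
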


\begin{proof}
Let $\Delta$ be a $\mc{P}$-van Kampen diagram for the null-homotopic
word $[w, (uv)^n]$ (see Diagram~\ref{fig1}).
\begin{figure}[htbp]
  \psfrag{u}{$u$}
  \psfrag{v}{$v$}
  \psfrag{p1}{$p_1$}
  \psfrag{p2}{$p_2$}
  \psfrag{pn}{$p_n$}
  \psfrag{q1}{$q_1$}
  \psfrag{q2}{$q_2$}
  \psfrag{qn}{$q_n$}
  \psfrag{w}{$w$}
  \centering \includegraphics{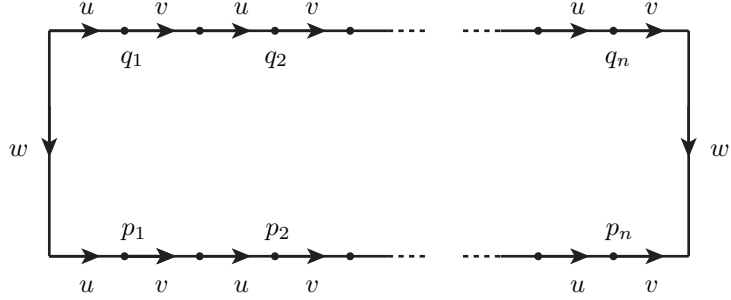}
  \caption{The van Kampen diagram $\Delta$} \label{fig1}
\end{figure}
For each $i = 1 , 2, \ldots, n$, define $p_i$ to
be the vertex in $\partial \Delta$ such that the anticlockwise path
in $\partial \Delta$ from the basepoint around to $p_i$ is labelled
by the word $w(uv)^{i-1}u$.  Similarly define $q_i$ to be the vertex
in $\partial \Delta$ such that the anticlockwise path in $\partial
\Delta$ from the basepoint around to $q_i$ is labelled by the word
$w (uv)^n w^{-1} (uv)^{i-n} v^{-1}$.  We will show that for each $i$
there is a $\mathcal{B}$-path (i.e. an edge path in $\Delta$
labelled by a word in the letters $\mathcal{B}$) from $p_i$ to
$q_i$.

We assume that the reader is familiar with Bass-Serre theory, as
exposited in \cite{Serre1}.  Let $T$ be the Bass-Serre tree
associated to the splitting $G_1 \ast_H G_2$. This consists of an
edge $gH$ for each coset $\Gamma / H$ and a vertex $gG_i$ for each
coset $\Gamma / G_i$.  The edge $gH$ has initial vertex $gG_1$ and
terminal vertex $gG_2$.  We will construct a continuous (but
non-combinatorial) map $\Delta \rightarrow T$ as the composition of
the natural map $\Delta \rightarrow Cay^2(\mc{P})$ with the map $f :
Cay^2(\mc{P}) \rightarrow T$ defined below.

There is a natural left action of $\Gamma$ on each of
$Cay^2(\mc{P})$ and $T$ and we construct $f$ to be equivariant with
respect to this as follows.  Let $m$ be the midpoint of the edge $H$
of $T$ and define $f$ to map the vertex $g \in Cay^2(\mc{P})$ to the
point $g \cdot m$, the midpoint of the edge $gH$.  Define $f$ to map
the edge of $Cay^2(\mc{P})$ labelled $a \in \mathcal{A}_i$ joining
vertices $g$ and $ga$ to the geodesic segment joining $g \cdot m$ to
$ga \cdot m$. Since $a \not\in H$ this segment is an embedded arc of
length $1$ whose midpoint is the vertex $gG_i$.  Define $f$ to
collapse the edge in $Cay^2(\mc{P})$ labelled $b \in \mathcal{B}$
joining vertices $g$ and $gb$ to the point $g \cdot m = gb \cdot m$.
This is well defined since $gH = gbH$. This completes the definition
of $f$ on the $1$-skeleton of $\Delta$; we now extend $f$ over the
$2$-skeleton.

Let $c$ be a $2$-cell in $Cay^2(\mc{P})$ and let $g$ be some vertex
in its boundary.  Assume that $c$ is metrised so as to be convex and
let $l$ be some point in its interior.  The form of the relations in
$\mc{P}$ ensures that the boundary label of $c$ is a word in the
letters $\mathcal{A}_i \cup \mathcal{B}$ for some $i$ and so every
vertex in $\partial c$ is labelled $gg'$ for some $g' \in G_i$. Thus
$f$ as so far defined maps $\partial c$ into the ball of radius
$1/2$ centred on the vertex $g G_i$; we extend $f$ to the interior
of $c$ by defining it to map the geodesic segment $[l, p]$, where $p
\in \partial c$, to the geodesic segment $[gG_i, f(p)]$.  This is
independent of the vertex $g \in \partial c$ chosen and makes $f$
continuous since geodesics in a tree vary continuously with their
endpoints.  We now define $\bar{f} : \Delta \rightarrow T$ to be the
map given by composing $f$ with the label-preserving map $\Delta
\rightarrow Cay^2(\mc{P})$ which sends the basepoint of $\Delta$ to
the vertex $1 \in Cay^2(\mc{P})$.

Since $w$ commutes with $u$ and $v$ we have that $\bar{f}(p_i) =
\mbox{$w(uv)^{i-1}u \cdot m$} = \mbox{$(uv)^{i-1} u \cdot m$} =
\bar{f} (q_i)$; define $S$ to be the preimage under $\bar f$ of this
point. By construction, the image of the interior of each $2$-cell
in $\Delta$ and the image of the interior of each
$\mathcal{A}_i$-edge is disjoint from $\bar f(p_i)$.  Thus $S$
consists of vertices and $\mathcal{B}$-edges and so finding a
$\mathcal{B}$-path from $p_i$ to $q_i$ reduces to finding a path in
$S$ connecting these vertices. Let $s_i$ and $t_i$ be the vertices
of $\partial \Delta$ immediately preceding and succeeding $p_i$ in
the boundary cycle.  Unless $h=1$, in which case the theorem is
trivial, the form of the word $[w, (uv)^n]$, together with the
normal form theorem for amalgamated products, implies that all the
vertices $p_i$, $s_i$ and $t_i$ lie in the boundary of the same disc
component $D$ of $\Delta$. Furthermore, since $u$ and $v$ are words
in the letters $\mathcal{A}_1$ and $\mathcal{A}_2$ respectively, the
points $f(s_i)$ and $f(t_i)$ are separated in $T$ by $f(p_i)$. Thus
$s_i$ and $t_i$ are separated in $D$ by $S$ and so there exists an
edge path $\gamma_i$ in $S$ from $p_i$ to some other vertex $r_i \in
\partial D$. Since $\gamma_i$ is a $\mathcal{B}$-path it follows
that the word labelling the sub-arc of the boundary cycle of
$\Delta$ from $p_i$ to $r_i$ represents an element of $H$, and, by
considering subwords of $[w, (uv)^n]$, we see that the only
possibility is that $r_i = q_i$. Thus, for each $i = 1, \ldots, n$,
the path $\gamma_i$ gives the required $\mathcal{B}$-path connecting
$p_i$ to $q_i$.  We choose each $\gamma_i$ to contain no repeated
edges.

For $i \neq j$, the two paths $\gamma_i$ and $\gamma_j$ are disjoint,
since if they intersected there would be a $\mathcal{B}$-path
joining $p_i$ to $p_j$ and thus the word labelling the subarc of the
boundary cycle from $p_i$ to $p_j$ would represent an element of
$H$. Observe that no two edges in any of the paths $\gamma_1, \ldots
, \gamma_n$ lie in the boundary of the same $2$-cell in $\Delta$
since each relation in $\mathcal{P}$ contains at most one occurrence
of a letter in $\mathcal{B}$. Because the word labelling $\partial
\Delta$ contains no occurrences of a letter in $\mathcal{B}$ the
interior of each edge of a path $\gamma_i$ lies in the interior of
$\Delta$ and thus in the boundary of two distinct $2$-cells.  Since
each path $\gamma_i$ contains no repeated edges we therefore obtain
the bound $\Area(\Delta) \geq \sum_{i=1}^n 2|\gamma_i|$. But the
word labelling each $\gamma_i$ is equal to $h$ in $\Gamma$ and so
the length of $\gamma_i$ is at least ${\rm d}_\mathcal{B}(1, h)$
whence we obtain the required inequality.
\end{proof}

We are now in a position to prove the main theorem.

\begin{proof}[Proof of Theorem \ref{thm1}]
To avoid excessive superscripts we change notation and write $x_i,
y_i$ for the generators of $F^{(i)}_2$, $i=1,2$.

By Proposition \ref{prop1} and Theorem \ref{thm2} we have that $K
\cong K^3_2(2) \cong L_1 \ast_M L_2$ where, as subgroups of
$F_2^{(1)} \times F_2^{(2)}$, $L_1 = K^2_2(1)$ is generated by
$\mathcal{A}_1 = \{x_1x_2^{-1}, y_1, y_2\}$, $L_2 \cong K^2_2(1)$ is
generated by $\mathcal{A}_2 = \{x_1, x_2, y_1y_2^{-1}\}$ and $M =
K^2_2(2)$ is generated by $\mathcal{B} = \{x_1x_2^{-1}, y_1y_2^{-1},
[x_1, y_1] \}$.  To obtain the generating set for $L_2$ we have here
implicitly used the automorphism of $F_2^{(1)} \times F_2^{(2)}$
which interchanges $x_i$ with $y_i$ and realises the isomorphism
between $L_2$ and $K_2^2(1)$.

For $n \in \N$, define $h_n$ to be the element $[x_1^n, y_1^n] \in
K^2_2(2)$ and define $w_n$ to be the word $[(x_1x_2^{-1})^n, y_1^n]
\in \mc{A}_1^{\pm\bast}$ representing $h_n$.  Note that $h_n$
commutes with both $y_2 \in \mathcal{A}_1$ and $x_2 \in
\mathcal{A}_2$ so by Theorem \ref{thm3} the word $[w_n,
(y_2x_2)^n]$, which has length $16n$, has area at least $2n \,
d_\mathcal{B}(1, h_n)$.  We claim that $d_\mathcal{B}(1, h_n) \geq
n^2$.

Suppose that in $F_2^{(1)} \times F_2^{(2)}$ the element $h_n$ is
represented by a word $w \equiv w(x_1x_2^{-1}, y_1y_2^{-1}, [x_1,
y_1])$ in the generators $\mathcal{B}$.  Let $k$ be the number of
occurrences of the third variable in the word $w$.  We will show
that $k \geq n^2$.

Observe that, as a group element, the word $w(x_1x_2^{-1}, y_1y_2^{-1},
[x_1, y_1])$ is equal to the word $w(x_1, y_1, [x_1, y_1]) \,
w(x_2^{-1}, y_2^{-1}, 1).$ Thus we have that $[x_1^n, y_1^n]$ is
freely equal to $w(x_1, y_1, [x_1, y_1])$ and that $w(x_2^{-1},
y_2^{-1}, 1)$, and thus $w(x_1, y_1, 1)$, is freely equal to the
empty word. It follows that $[x_1^n, y_1^n]$ can be converted to the
empty word by free expansions, free contractions and deletion of $k$
subwords $[x_1, y_1]$. Hence $[x_1^n, y_1^n]$ is a null-homotopic
word over the presentation $\mc{P} = \langle x_1, y_1 \, | \, [x_1,
y_1] \rangle$ with $\mc{P}$-area at most $k$. But $\mc{P}$ presents the rank $2$ free abelian
group, and basic results on Dehn functions give that $[x_1^n,
y_1^n]$ has area $n^2$ over this presentation. Thus $k \geq n^2$.
\end{proof}

Note that the above proof also shows that $K^2_2(2)$ has at least
quadratic distortion in each of $K^2_2(1)$ and $F_2^{(1)} \times
F_2^{(2)}$.  In fact it can be shown that the distortion is
precisely quadratic.

\bigskip \emph{Acknowledgements.}  I would like to thank my thesis
advisor, Martin Bridson, for his many helpful comments made during
the preparation of this article.


\begin{thebibliography}{10}

\bibitem{baum84}
G.~Baumslag and J.~E. Roseblade.
\newblock Subgroups of direct products of free groups.
\newblock {\em J. London Math. Soc. (2)}, 30(1):44--52, 1984.

\bibitem{Bieri1}
R.~Bieri.
\newblock {\em Homological dimension of discrete groups}.
\newblock Mathematics Department, Queen Mary College, London, 1976.
\newblock Queen Mary College Mathematics Notes.

\bibitem{Bridson3}
M.~R. Bridson.
\newblock Doubles, finiteness properties of groups, and quadratic isoperimetric
  inequalities.
\newblock {\em J. Algebra}, 214(2):652--667, 1999.

\bibitem{brid02}
M.~R. Bridson.
\newblock The geometry of the word problem.
\newblock In {\em Invitations to geometry and topology}, volume~7 of {\em Oxf.
  Grad. Texts Math.}, pages 29--91. Oxford Univ. Press, Oxford, 2002.

\bibitem{brid02B}
M.~R. Bridson, J.~Howie, C.~F. Miller, III, and H.~Short.
\newblock The subgroups of direct products of surface groups.
\newblock {\em Geom. Dedicata}, 92:95--103, 2002.
\newblock Dedicated to John Stallings on the occasion of his 65th birthday.

\bibitem{Brown1}
K.~S. Brown.
\newblock {\em Cohomology of groups}, volume~87 of {\em Graduate Texts in
  Mathematics}.
\newblock Springer-Verlag, New York, 1982.

\bibitem{Dison07}
W.~Dison, M.~Elder, T.~Riley, and R.~Young.
\newblock The {D}ehn function of {S}tallings' group.
\newblock arXiv:0712.3877v1.

\bibitem{mein94}
H.~Meinert.
\newblock The geometric invariants of direct products of virtually free groups.
\newblock {\em Comment. Math. Helv.}, 69(1):39--48, 1994.

\bibitem{miha58}
K.~A. Miha{\u\i}lova.
\newblock The occurrence problem for direct products of groups.
\newblock {\em Dokl. Akad. Nauk SSSR}, 119:1103--1105, 1958.

\bibitem{mill71}
C.~F. Miller, III.
\newblock {\em On group-theoretic decision problems and their classification}.
\newblock Princeton University Press, Princeton, N.J., 1971.
\newblock Annals of Mathematics Studies, No. 68.

\bibitem{Riley1}
T.~Riley.
\newblock Filling functions.
\newblock In {\em The Geometry of the Word Problem for Finitely Generated
  Groups}, Advanced Courses in Mathematics. CRM Barcelona, pages 81--151.
  Birkh\"auser Verlag, Basel, 2007.

\bibitem{Serre1}
J.-P. Serre.
\newblock {\em Trees}.
\newblock Springer-Verlag, Berlin, 1980.
\newblock Translated from the French by John Stillwell.

\bibitem{stal63}
J.~Stallings.
\newblock A finitely presented group whose 3-dimensional integral homology is
  not finitely generated.
\newblock {\em Amer. J. Math.}, 85:541--543, 1963.

\end{thebibliography}
\end{document}